\newtheorem{theorem}{Theorem}[section]
\newtheorem{lemma}[theorem]{Lemma}
\newtheorem{proposition}[theorem]{Proposition}
\theoremstyle{remark}
\numberwithin{equation}{section}
\begin{document}
\title{Acute Triangulations of the Cuboctahedral Surface}
\author{Xiao Feng  \ \ \ \ \ \ Liping Yuan\thanks {Corresponding author}}
\date{}
 \maketitle

\begin{center}
{ College of Mathematics and Information Science,

 Hebei Normal
University, 050016 Shijiazhuang, China.}

lpyuan@mail.hebtu.edu.cn.
\end{center}
\begin{abstract}

In this paper we prove that the surface of the cuboctahedron  can be
triangulated into 8 non-obtuse triangles and  12 acute triangles.
Furthermore, we show that both bounds are the best possible.

\end{abstract}

\section{Introduction}

A {\em triangulation} of a two-dimensional space means  a collection
of (full) triangles covering the space, such that the intersection
of any two triangles is either empty or consists of a vertex or of
an edge. A triangle is called {\it geodesic} if all its edges are
{\it segments}, i.e., shortest paths between the corresponding
vertices. We are interested only in {\it geodesic triangulations},
all the members of which are, by definition, geodesic triangles. The
number of triangles in a triangulation is called its {\it size}.

In rather general two-dimensional spaces, like Alexandrov surfaces,
two geodesics starting at the same point determine a well defined
angle. Our interest will be focused on triangulations which are {\it
acute} (resp. {\it non-obtuse}), which means that the angles of all
geodesic triangles are
 smaller (resp. not greater) than $\frac{\pi}{2}$.

 The discussion
of acute triangulations has one of its origins in a problem of
Stover reported in 1960 by Gardner in his Mathematical Games section
of the {\it Scientific American} (see \cite{margard1},
\cite{margard2}, \cite{margard}). There the question was raised
whether a triangle with one obtuse angle can be cut into smaller
triangles, all of them acute. In the same year, independently,
Burago and Zalgaller \cite{bura} investigated in considerable depth
acute triangulations of polygonal complexes, being led to them by
the problem of their isometric embedding into $\mathbb{R}^3$.
However, their method could not give an estimate on the number of
triangles used in the existed acute triangulations.  In 1980,
Cassidy and Lord ~\cite{ccgl} considered acute triangulations of the
square. Recently,  acute triangulations of quadrilaterals
\cite{hmarqua}, trapezoids \cite{yuan-trapezoid}, convex
quadrilaterals \cite{convex-qua}, pentagons \cite{yuan-pentagon} and
general polygons \cite{hmarpoly, yuanpolygon} have also been
considered.

On the other hand, compact convex surfaces have also been
triangulated. Acute and non-obtuse triangulations of all Platonic
surfaces, which are  surfaces  of the five well-known  Platonic
solids, have been investigated in \cite{hiz01}, \cite{izam20}, and
\cite{izam12}. Recently, Saraf \cite{saraf} considered the acute
triangulations of the polyhedral surfaces again, but there is
 still no estimate on the size of the existed acute triangulations.
 Maehara \cite{hmar-polyhedral} considered the proper acute triangulation of a polyhedral
 surface and obtained an upper bound of the size of the triangulation, which is determined by the length of the longest edge,
  the minimum value of the geodesic distance from a vertex to an edge that is not incident to the vertex, and
   the measure of the smallest face angle in the given polyhedral surface.
Furthermore, some other well-known surfaces have  also been acutely
triangulated, such as flat M\"obius strips \cite{yuan-mobius} and
flat tori \cite{itoh-yuan}.

 Combining all the known results for the polyhedral surfaces mentioned above,  we are  motivated to
investigate the non-obtuse and acute triangulations of the surfaces
of the Archimedean solids. In this paper we consider the surface of
 the Archimedean solid cuboctahedron, which is a convex polyhedron with eight
triangular faces and six square faces. It has 12 identical vertices,
with two triangles and two squares meeting at each, and 24 identical
edges, each separating a triangle from a square. For the sake of
convenience, let $\mathcal{C}$ denote the surface of the
cuboctahedron with side length 1.
 Let $\mathscr{T}$ denote  an  acute
 triangulation of $\mathcal{C}$ and  $\mathscr{T}_0$  a non-obtuse
 triangulation of $\mathcal{C}$. Let  $|\mathscr{T}|$ and
 $|\mathscr{T}_0|$
 denote  the size of $\mathscr{T}$ and $\mathscr{T}_0$ respectively. We prove that the best possible bounds for
$|\mathscr{T}|$ and $|\mathscr{T}_{0}|$ are $12$ and $8$
respectively.

\section{Non-obtuse triangulations}
\begin{theorem}\label{thm1}
The surface of the cuboctahedron admits a non-obtuse triangulation
with $8$ triangles and no non-obtuse triangulation with fewer
triangles.
\end{theorem}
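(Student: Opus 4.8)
The plan is to prove the two halves of the statement separately: exhibit a non-obtuse triangulation of $\mathcal{C}$ using exactly $8$ triangles, and then rule out any non-obtuse triangulation with $7$ or fewer.

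For the upper bound I would build the triangulation by hand, exploiting the symmetry of the cuboctahedron. The convenient model here is $\mathcal{C}$ presented as two triangular cupolas glued along a regular hexagonal equator, which displays both an equatorial reflection and a $3$-fold rotational symmetry; I would look for a triangulation invariant under a suitable subgroup, so that the $8$ triangles fall into only one or two congruence classes. Concretely, I would prescribe a short list of geodesic arcs whose endpoints lie among the $12$ vertices of $\mathcal{C}$, each arc described by unfolding the relevant chain of faces (triangle--square strips) into the plane, and then verify that these arcs cut $\mathcal{C}$ into $8$ geodesic triangles. Two checks are then needed: that each prescribed arc is genuinely a segment on $\mathcal{C}$, carried out by comparing its developed length with the competing routes over neighbouring faces; and that each of the $8$ triangles is non-obtuse, carried out by developing it into the plane and confirming that all three of its angles are $\le \frac{\pi}{2}$. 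Since every vertex of $\mathcal{C}$ has cone angle $2\cdot\frac{\pi}{3}+2\cdot\frac{\pi}{2}=\frac{5\pi}{3}$, I would arrange the combinatorics so that the triangle-angles meeting at each such vertex sum to $\frac{5\pi}{3}$ with none of them exceeding $\frac{\pi}{2}$.

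For the lower bound the core is a counting argument localised at the vertices of $\mathcal{C}$. First, each vertex $v$ of $\mathcal{C}$ has cone angle $\frac{5\pi}{3}<2\pi$, so no segment can have $v$ in its relative interior (a shortest path cannot pass through a cone point of angle $<2\pi$) and no geodesic triangle can contain $v$ in its interior (the interior of a geodesic triangle is flat); hence $v$ is necessarily a vertex of every triangulation $\mathscr{T}_0$. Second, at each such $v$ the triangles of $\mathscr{T}_0$ having a corner at $v$ have angles summing to $\frac{5\pi}{3}$, all $\le \frac{\pi}{2}$, which forces a lower bound on the number of triangle-corners meeting at $v$; summing these local bounds over the $12$ vertices of $\mathcal{C}$ and comparing with the total number $3|\mathscr{T}_0|$ of triangle-corners yields $|\mathscr{T}_0|\ge 8$. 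Finally, to see that $8$ is best possible rather than merely a valid estimate, I would examine the equality case: show that an $8$-triangle non-obtuse triangulation must realise each of those local bounds with equality and introduce no extra vertices, which pins its combinatorial type down to a symmetry image of the one produced in the construction.

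The step I expect to be the main obstacle is the construction together with its non-obtuseness verification. Covering $\mathcal{C}$ --- a surface assembled from triangular and square faces and carrying twelve positively curved cone points --- by only $8$ geodesic triangles forces those triangles to be large and to sprawl across several faces, so being non-obtuse is a global condition that can be confirmed only after developing each triangle; doing this while simultaneously respecting the angle bookkeeping at all twelve cone points and keeping every prescribed arc a true segment is the delicate part. Once the cone-point observation is in hand, the lower-bound counting is comparatively routine.
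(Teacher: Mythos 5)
Your lower-bound argument rests on a false premise: that a geodesic triangle on $\mathcal{C}$ cannot contain a vertex of $\mathcal{C}$ in its interior because ``the interior of a geodesic triangle is flat.'' A geodesic triangle is only required to have segments as edges; its interior may well contain cone points, and then its angle sum exceeds $\pi$ by the enclosed curvature. Indeed this \emph{must} happen here: if all $12$ cone points were vertices of a non-obtuse triangulation, each would need at least $\lceil(5\pi/3)/(\pi/2)\rceil=4$ incident triangle corners, hence at least $48$ corners in total, while $8$ triangles supply only $24$. So your proposed construction (angle sums of $5\pi/3$ at all twelve vertices of $\mathcal{C}$) cannot exist, and your counting ``over the $12$ vertices'' actually yields $3|\mathscr{T}_0|\ge 48$, i.e.\ $|\mathscr{T}_0|\ge 16$ --- contradicting the very triangulation you are trying to build. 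The paper's $8$-triangle triangulation has only $6$ vertices (combinatorially an octahedron): two antipodal pairs $a,a'$ and $b,b'$ of cuboctahedron vertices joined by two orthogonal geodesics that cross at the centers $c,c'$ of two opposite square faces; each of the $8$ congruent triangles has angles $\frac{\pi}{2},\frac{5\pi}{12},\frac{5\pi}{12}$ (sum $\frac{4\pi}{3}$) and swallows one of the remaining $8$ cone points in its interior.

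The salvageable part of your idea is the local degree bound applied to the vertices \emph{of the triangulation} rather than of $\mathcal{C}$: every vertex of a non-obtuse triangulation has total angle $5\pi/3$ or $2\pi$, hence degree at least $4$. Combined with $3F=2E$ (so $F$ is even) and Euler's formula, $2E=\sum\deg\ge 4V$ gives $F\ge 8$; equivalently, as in the paper, $F=4$ or $F=6$ forces the combinatorics of $K_4$ or of the double pyramid over a triangle, both of which have a degree-$3$ vertex, a contradiction. As written, though, both halves of your proposal fail for the same reason, so this is a genuine gap rather than a variant proof.
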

\begin{proof}
Fig.  \ref{fig-1} describes the unfolded surface $\mathcal{C}$. We
fix two vertices $a$ and $b$, which are the vertices of a diagonal
of a square face on $\mathcal{C}$. Let $a'$, $b'$ be the antipodal
vertices of $a$, $b$ respectively. There are six geodesics  from $a$
to $a'$ and $b$ to $b'$.
\begin{figure}[htbp]
\begin{center}
 \includegraphics*[height=5cm]{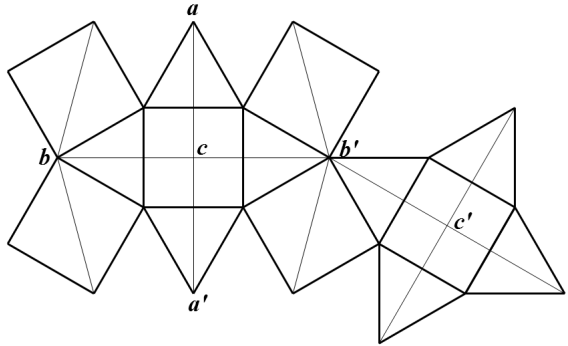}
 \end{center}
 \caption{ A non-obtuse triangulation of $\mathcal{C}$. }
\label{fig-1}
\end{figure}We choose those two passing through two
triangular faces and one square face. Denote the two intersection
points of the geodesics $aa'$ and $bb'$ chosen above by $c$ and
$c'$. Clearly, $c$ and $c'$ are an antipodal pair of vertices on
$\mathcal{C}$. Draw the segments from $a$ (resp. $a'$) to $b$ and
$b'$. Thus $\mathcal{C}$ is triangulated into $8$ non-obtuse
triangles:
 $abc, ab'c, abc', ab'c', a'bc,
a'b'c, a'bc', a'b'c'$.

Indeed, noticing that all of those eight triangles are congruent, we
only need to show that the triangle $abc$ is non-obutse. By the
construction we know that $aa'$ is orthogonal to $bb'$. So $\angle
acb=\frac{\pi}{2}$. Further, it is clear that $\angle abc=\angle
bac=\frac{5\pi}{12}$.

We prove now that for any  non-obtuse triangulation
$\mathscr{T}_{0}$ of $\mathcal{C}$, we always have
$|\mathscr{T}_{0}|\geq 8$. If not, then we have
$|\mathscr{T}_{0}|=4$ or $|\mathscr{T}_{0}|=6$. If
$|\mathscr{T}_{0}|=4$, then $\mathscr{T}_{0}$ has $(4\times3)/2=6$
edges and, by Euler's formula, $6-4+2=4$ vertices. So
$\mathscr{T}_{0}$ is isomorphic to $K_{4}$; If
$|\mathscr{T}_{0}|=6$, then $\mathscr{T}_{0}$ is isomorphic to the
1-skeleton of the double pyramid over the triangle. In both cases
there are vertices with degree 3. However, at each vertex of
$\mathcal{C}$ the total angle is $\frac{5\pi}{3}$, so each vertex in
$\mathscr{T}_{0}$ has degree at least 4.  Clearly, each other vertex
of $\mathscr{T}_{0}$ also has degree at least 4. Thus we obtain a
contradiction.

The proof is complete.
\end{proof}
\section{Acute triangulations}
\begin{theorem}\label{thm2}
The surface of the cuboctahedron  admits an acute triangulation with
$12$ triangles.
\end{theorem}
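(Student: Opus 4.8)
The plan is to build an explicit triangulation from the highly symmetric hexagonal ``equator'' of the cuboctahedron. First recall the relevant intrinsic geometry: $\mathcal{C}$ is flat away from its twelve vertices, each of which is a cone point of angle $\tfrac{5\pi}{3}$. Since $\tfrac{5\pi}{3}<2\pi$, no shortest path passes through a vertex of $\mathcal{C}$, so in any triangulation each vertex of $\mathcal{C}$ is either a vertex of the triangulation or lies in the interior of exactly one triangle; and by Gauss--Bonnet a geodesic triangle enclosing $m$ vertices of $\mathcal{C}$ has angle sum $\pi+\tfrac{m\pi}{3}$, whence $m\le1$ if it is acute and, when $m=1$, all three of its angles are forced into the narrow interval $(\tfrac{\pi}{3},\tfrac{\pi}{2})$. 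A triangulation with $12$ faces has, by Euler's formula, $18$ edges and only $8$ vertices, so at least four vertices of $\mathcal{C}$ must be absorbed into triangle interiors (as already happens in the proof of Theorem~\ref{thm1}).

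For the construction I would fix one of the four $3$-fold axes of the cuboctahedron. It passes through the centres $N,S$ of a pair of opposite triangular faces, and exactly six of the twelve vertices of $\mathcal{C}$ lie on the corresponding ``equatorial'' hexagon $u_1u_2\cdots u_6$; its six sides are edges of $\mathcal{C}$, each of length $1$. The proposed triangulation $\mathscr{T}$ has vertex set $\{N,S,u_1,\dots,u_6\}$ and its $12$ triangles are $Nu_iu_{i+1}$ and $Su_iu_{i+1}$ ($i\bmod 6$) --- combinatorially a hexagonal bipyramid. The six vertices of $\mathcal{C}$ not used (the vertices of the two faces carrying $N$ and $S$) then fall one apiece into the six triangles sitting over ``every other'' equatorial edge; those six triangles have angle sum $\tfrac{4\pi}{3}$, while the remaining six are ordinary Euclidean triangles, isosceles with base $1$ and legs $Nu_i$ (resp. $Su_i$).

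The verification splits into three parts. (i) Each $Nu_i$ (resp. $Su_i$) is a segment: unfold the (at most two) faces it crosses --- the face carrying $N$ and one adjacent square --- to read off its length, then compare with the finitely many competing unfolded routes, all of which are short. (ii) The $12$ triangles tile $\mathcal{C}$: by the $3$-fold symmetry it suffices to check that the six geodesics leaving $N$ occur around $N$ in the same cyclic order as the $u_i$ (two of them exiting through each edge of the face carrying $N$) with angles at $N$ summing to $2\pi$, and likewise at $S$; along the equator the triangles visibly match. (iii) Every angle is $<\tfrac{\pi}{2}$: for an ordinary triangle this is the law of cosines on the unfolding and the result is comfortably acute; for a triangle enclosing a vertex $v$ of $\mathcal{C}$, compute the angle at $N$ inside the flat face carrying $N$, compute an angle at one of its $u_i$ by unfolding the chain of faces around $v$ (equivalently, by slitting the cone at $v$ open to a planar sector of angle $\tfrac{5\pi}{3}$), and use ``angle sum $=\tfrac{4\pi}{3}$'' as a consistency check; one expects a triangle with angles near $78^\circ,81^\circ,81^\circ$.

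The main obstacle is precisely step (iii) for the six cone-containing triangles: their angles are pinned to sum to $\tfrac{4\pi}{3}$, so there is no slack at all --- each must land strictly inside $(60^\circ,90^\circ)$ --- and this is what makes the symmetric choice of $N,S$ and of the equatorial hexagon essential rather than incidental, since a less balanced configuration would push one of these angles to $90^\circ$ or beyond. The subsidiary bookkeeping --- certifying that the listed geodesics really are shortest paths in step (i) and that the picture assembles into a genuine triangulation with the correct incidences in step (ii) --- is routine once this symmetric configuration is written down.
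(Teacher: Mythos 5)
Your construction is correct, but it is genuinely different from the one in the paper. The paper works with the $4$-fold configuration: it takes a cycle $a'b'c'd'$ of four square-face diagonals (each of length $\sqrt{2}$) separating $\mathcal{C}$ into two regions, adjoins four cuboctahedron vertices $a,b,c,d$ and four Steiner points $a^*,b^*,c^*,d^*$ placed on those diagonals, and then verifies acuteness by a fairly long angle chase around $a^*$ and $a$. You instead use a $3$-fold axis: the hexagonal bipyramid over the equatorial hexagon $u_1\cdots u_6$ with apexes at the centres $N,S$ of the two polar triangular faces. I have checked your numbers and they work out exactly as you predict: with edge length $1$ one gets $|Nu_i|=\sqrt{1/3+1+1/\sqrt{3}}\approx 1.382$ by unfolding the polar triangle with one adjacent square (the competing route around the intervening cone point subtends $\pi$ there and has length $1/\sqrt{3}+1>1.382$, so these really are segments); the six flat triangles, which sit over the squares, have angles $\approx 42.4^{\circ},68.8^{\circ},68.8^{\circ}$; the angles at $N$ and $S$ then sum correctly to $2\pi$, forcing the remaining angle at each apex to be $\approx 77.6^{\circ}$; and the cone angle $\tfrac{5\pi}{3}$ at each $u_i$ forces the two remaining angles of each cone-containing triangle to be $\approx 81.2^{\circ}$, consistent with the Gauss--Bonnet sum $\tfrac{4\pi}{3}$. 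So every angle is strictly below $\tfrac{\pi}{2}$. What your route buys is symmetry: there are only two congruence classes of triangles, so the entire acuteness verification reduces to one law-of-cosines computation plus two angle-sum bookkeeping steps, whereas the paper must estimate six essentially different angles around $a^*$ and four around $a$. The one caveat is that your write-up states the verification as a plan rather than carrying it out; the three steps you list are exactly the right ones and all succeed, but steps (i) and (iii) should be executed explicitly (as above) for the argument to stand as a proof.
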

\begin{proof} Let $a'$, $b'$, $c'$ and $d'$ be four distinct
vertices of the cuboctahedron such that
$|a'b'|=|b'c'|=|c'd'|=|d'a'|=\sqrt{2}$, where $|pq|$ denotes the
intrinsic distance on the surface $\mathcal{C}$ between two points
$p$ and $q$.  Clearly, the four segments $a'b'$, $b'c'$, $c'd'$ and
$d'a'$ determine a cycle which decomposes $\mathcal{C}$ into two
regions $\mathcal {C}_{1}$ and $\mathcal {C}_{2}$. Take a vertex $a$
(resp. $b, c, d$) adjacent to both $a'$ (resp. $b', c', d'$) and
$b'$ (resp. $c', d', a'$) such that $a, c\in \mathcal {C}_{1}$,
$b,d\in\mathcal {C}_{2}$. Take a point $a^{*}$ (resp. $b^{*}, c^{*},
d^{*}$) on $a'b'$ (resp. $b'c', c'd', d'a'$) such that $\angle
a'aa^{*}$ (resp. $\angle b'bb^{*}$, $\angle c'cc^{*}$, $\angle
d'dd^{*}$) =$\frac{\pi}{6}$.

We get a triangulation of $\mathcal{C}$ with 12 triangles:

$a^{*}ab^{*}, a^{*}b^{*}b, a^{*}bd, a^{*}dd^{*}, a^{*}d^{*}a,
b^{*}bc^{*}, b^{*}c^{*}c, b^{*}ca, c^{*}cd^{*}, c^{*}d^{*}d,
c^{*}db, d^{*}ca$.

\begin{figure}[htbp]
\begin{center}
 \includegraphics*[height=6cm]{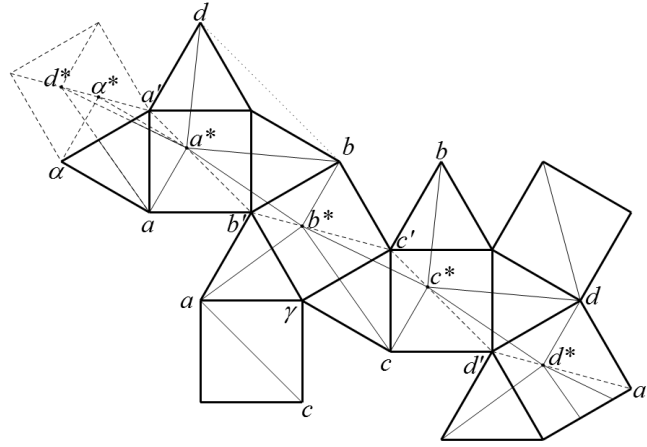}
 \end{center}
 \caption{ An acute triangulation of $\mathcal {C}$. }
\label{fig-2}
\end{figure}

There are two shortest paths from $a^{*}$ to $b^{*}$ (resp. $c^{*}$
to $d^{*}$); here we choose the path in $\mathcal {C}_{2}$. There
are two shortest paths from $b^{*}$ to $c^{*}$ (resp. $d^{*}$ to
$a^{*}$); here we choose the path in $\mathcal {C}_{1}$, see Fig.
\ref{fig-2}.

Indeed, the values of the angles around $a^{*},b^{*},c^{*},d^{*}$
(resp. $a,b,c,d$) are entirely the same. So we only need to consider
the angles around $a^{*}$ and $a$ respectively.

Firstly, we consider the angles around $a^{*}$.

In the triangle $a^{*}b'b$, $\angle
a^{*}b'b=\frac{\pi}{4}+\frac{\pi}{3}=\frac{7\pi}{12}>\frac{\pi}{2}$,
which implies that $\angle b^{*}a^{*}b<\angle
b'a^{*}b<\frac{\pi}{2}$.

In Fig. \ref{fig-2} the planar  circle $C$ with diameter $bd$ (the
dot line-segment) passes through the midpoint of $a'b'$, say, $x'$.
So, the segment $a'b'$ is tangent to $C$ at $x'$. Since $a^{*}\in
a'b'$ and $a^{*}\neq x'$, we have $\angle ba^{*}d<\frac{\pi}{2}$.

 In the quadrilateral $a^{*}da'd^{*}$, $\angle
da'd^{*}=\frac{\pi}{3}+\frac{\pi}{2}+\frac{\pi}{3}+\frac{\pi}{4}=\frac{17\pi}{12}$,
which implies that $\angle a^{*}d^{*}a'+\angle d^{*}a^{*}d+\angle
a^{*}da'=\frac{7\pi}{12}$. However, $\angle a^{*}da'>\angle
ada'=\frac{\pi}{12}$. Therefore, $\angle d^{*}a^{*}d<\angle
d^{*}a^{*}d+\angle
a^{*}da'<\frac{7\pi}{12}-\frac{\pi}{12}=\frac{\pi}{2}$.

Denote by $\alpha$ the vertex adjacent to both $a$ and $a'$. Take a
point $\alpha^{*}\in d'a'$ such that $\angle \alpha^{*}\alpha
a'=\frac{\pi}{6}$. Clearly,  the triangle $aa^{*}a'$ is congruent to
the triangle $\alpha\alpha^{*}a'$, so we have
$|\alpha^{*}a'|=|a^{*}a'|$, which implies that $\angle
a'a^{*}\alpha^{*}=\frac{\pi}{12}$. Then $\angle
\alpha^{*}a^{*}a=(\pi-\frac{\pi}{6}-\frac{\pi}{4})-\frac{\pi}{12}=\frac{\pi}{2}$.
Noticing that the distance from $d^{*}$ to $a'$ is further than that
from $\alpha^{*}$ to $a'$, we have $\angle d^{*}a^{*}a<\angle
\alpha^{*}a^{*}a=\frac{\pi}{2}$.

In the triangle $aa^{*}b'$, $\angle
aa^{*}b'=\pi-\frac{\pi}{3}-\frac{\pi}{4}=\frac{5\pi}{12}$. In the
triangle $a^{*}b'b^{*}$, $\angle
a^{*}b'b^{*}=\frac{\pi}{4}+\frac{\pi}{3}+\frac{\pi}{4}=\frac{5\pi}{6}$.
Noticing that
$|b'b^{*}|<\frac{1}{2}|b'c'|=\frac{1}{2}|a'b'|<|a^{*}b'|$, we have
$\angle b'a^{*}b^{*}<\angle b'b^{*}a^{*}$ and therefore $\angle
b'a^{*}b^{*}<\frac{\pi}{12}$. So we have $\angle
aa^{*}b^{*}<\frac{5\pi}{12}+\frac{\pi}{12}=\frac{\pi}{2}$.

Consider now the angles around $a$. It is clear that  $\angle
a^{*}ad^{*}<\angle a^{*}a\alpha=\frac{\pi}{2}$ and $\angle
a^{*}ab^{*}=\angle a^{*}ab'+\angle
b'ab^{*}<\frac{\pi}{3}+\frac{\pi}{6}=\frac{\pi}{2}$. Let $\gamma$
denote the vertex adjacent to both $c$ and $c'$. Then $\angle
b^{*}ac=\angle \gamma ac+\angle b^{*}a\gamma<\angle \gamma ac+\angle
ba\gamma=\frac{\pi}{4}+\frac{\pi}{4}=\frac{\pi}{2}$. Finally,
$\angle d^{*}ac=\angle d^{*}a\alpha+\angle \alpha
ac<\frac{\pi}{6}+\frac{\pi}{4}<\frac{\pi}{2}$.
\end{proof}

\section{No acute triangulation with fewer triangles}
Let $\mathscr{C}$ be the 1-skeleton of the cuboctahedron. The
graph-theoretic distance $d_\mathscr{C}(v,w)$ between the vertices
$v,\ w$ of $\mathscr{C}$ is called the {\it $\mathscr{C}$-distance}
between $v$ and $w$. Let $g(u,v)$ denote  a geodesic between two
points $u$ and $v$ on the surface $\mathcal{C}$. We start with the
following lemma.
\begin{lemma}\label{lem1}
Let $u$, $w_1$, $w_2$ be three vertices of $\mathcal{C}$. Then the
angle formed by $g(u, w_1)$ and $g(u, w_2)$ on $\mathcal{C}$ is
equal to $\frac{\pi}{12}i$, where $i\in \mathbb{Z}$ and $1\leq
i\leq20$.
\end{lemma}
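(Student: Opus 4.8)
The plan is to reduce the statement to a finite, discrete computation by understanding exactly what geodesics between vertices of $\mathcal{C}$ look like and which directions they can leave a vertex in. The key geometric fact is that when one unfolds the faces of $\mathcal{C}$ that a geodesic $g(u,w)$ crosses into the plane, the geodesic becomes a straight segment; moreover, because every face is either an equilateral triangle of side $1$ or a unit square, and the gluing always matches unit edges, the unfolded picture lives on a triangular/square grid whose natural angular unit is $\frac{\pi}{12}$ (the triangles contribute angles that are multiples of $\frac{\pi}{3}$ and the squares angles that are multiples of $\frac{\pi}{2}$, and $\gcd$ in the relevant sense is $\frac{\pi}{12}$; equivalently $\frac{\pi}{3}=\frac{4\pi}{12}$ and $\frac{\pi}{2}=\frac{6\pi}{12}$). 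So the first step is to fix a vertex $u$, develop the cone of total angle $\frac{5\pi}{3}$ at $u$ (two triangles and two squares, in the cyclic order forced by the cuboctahedron) onto the plane, and observe that $\frac{5\pi}{3}=\frac{20\pi}{12}$, which already explains the range $1\le i\le 20$.

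Next I would show that the direction in which a geodesic $g(u,w_j)$ leaves $u$ is always a multiple of $\frac{\pi}{12}$ relative to the edges emanating from $u$. For this I would argue that any geodesic from $u$ to another vertex $w$ can be taken to pass only through the interiors of faces and across edges (never through a non-endpoint vertex, by a standard shortening argument, since the cone angle at every vertex is $\frac{5\pi}{3}>\pi$ so a geodesic cannot pass through a vertex), and then develop the strip of faces it crosses. In that development $u$ and $w$ are both lattice points of the ambient triangular grid refined by the square grid; the crucial claim is that the angle between the initial edge at $u$ and the segment $uw$ in the development is a multiple of $\frac{\pi}{12}$. This I would verify by checking the finitely many combinatorial types of face-strips that can occur between two vertices at $\mathscr{C}$-distance $1,2,3,\dots$ — here one uses that the cuboctahedron is small (diameter of the $1$-skeleton is bounded, so only finitely many strip-types up to symmetry need be examined), together with the fact that the shortest paths described earlier (those used in Theorems 2.1 and 3.1, through ``two triangles and one square'' etc.) are exactly the geodesics realizing these distances.

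Having established that each of $g(u,w_1)$ and $g(u,w_2)$ leaves $u$ in a direction that is an integer multiple of $\frac{\pi}{12}$ within the developed cone, the angle between them is the difference of two such multiples, hence itself a multiple of $\frac{\pi}{12}$; and since the total cone angle is $\frac{20\pi}{12}$, the angle between the two geodesics, measured as the smaller of the two arcs (or simply as an angle in $(0,\frac{5\pi}{3})$), is $\frac{\pi}{12}i$ with $1\le i\le 20$. The ruling out of $i=0$ is immediate since $w_1\ne w_2$ forces the two geodesics to be distinct rays, and a brief check shows two distinct vertices cannot be connected to $u$ by geodesics leaving in the same direction unless they coincide.

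I expect the main obstacle to be making rigorous the claim that \emph{every} geodesic between two vertices leaves $u$ in a direction that is a multiple of $\frac{\pi}{12}$ — that is, controlling all possible face-strips, not just the ``obvious'' short ones. A geodesic between two nearby vertices could in principle wind around the surface and cross many faces; one must argue either that such long geodesics are never shortest paths (a length/cut-locus estimate on $\mathcal{C}$), or that even when they occur their developed direction still lands on the $\frac{\pi}{12}$-lattice because every edge-crossing in the development is a reflection/rotation that preserves that lattice. The cleanest route is probably the latter: show that the group generated by the edge-identifications of $\mathcal{C}$, acting on developments, preserves the refined triangular-square lattice and its associated set of directions (multiples of $\frac{\pi}{12}$), so that no matter how complicated the strip is, both $u$ and $w$ and all intermediate edges stay lattice-compatible, forcing the segment $uw$ to have direction a multiple of $\frac{\pi}{12}$.
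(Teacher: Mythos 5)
Your overall strategy (develop the cone of total angle $\tfrac{5\pi}{3}=\tfrac{20\pi}{12}$ at $u$ and show every geodesic to another vertex leaves $u$ in a direction that is a multiple of $\tfrac{\pi}{12}$) is the right one, and it matches the spirit of the paper. But the step that carries all the content of the lemma is exactly the one you defer, and the mechanism you prefer for it --- that the developing map ``preserves a refined triangular--square lattice and its associated set of directions'' --- is false. There is no common lattice underlying unfoldings of mixed unit triangles and unit squares, and vertex-to-vertex directions in a development are \emph{not} automatically multiples of $\tfrac{\pi}{12}$. Concretely: unfold a triangle $T_1$ with vertices $u=(0,0)$, $(1,0)$, $a_2=(\tfrac12,\tfrac{\sqrt3}{2})$, then the square across the edge opposite $u$, then the triangle across that square's far edge, then a square across one of that triangle's free edges; one corner of the last square develops to $(1+\sqrt3,\,0)$, and the vector from $a_2$ to it is $(\tfrac12+\sqrt3,\,-\tfrac{\sqrt3}{2})$, whose slope $\tfrac{\sqrt3-6}{11}\approx-0.388$ is not $\pm\tan(\tfrac{\pi}{12}k)$ for any $k$. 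So ``lattice compatibility'' cannot do the work; the $\tfrac{\pi}{12}$ phenomenon holds only for the segments that are actually shortest paths, and must be verified for those. (A small side slip: the reason a shortest path cannot pass through a vertex is that the cone angle $\tfrac{5\pi}{3}$ is less than $2\pi$, not that it exceeds $\pi$.)

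Your fallback route --- bound the combinatorial length of a geodesic and check the finitely many face-strips --- is sound and is essentially what the paper does, but you leave it entirely unexecuted, and it is the whole proof. The paper's argument is: for every vertex $v\ne u$ one has $d_{\mathscr{C}}(u,v)\le 3$; the geodesics from $u$ are exactly the $4$ edges ($d_{\mathscr{C}}=1$), the $2$ square-face diagonals and $2$ geodesics to each of the remaining four distance-$2$ vertices, and $6$ geodesics to the antipodal vertex ($d_{\mathscr{C}}=3$), for a total of $4+2+8+6=20$; and an explicit unfolding shows these $20$ directions divide the total angle $\tfrac{5\pi}{3}$ at $u$ into $20$ equal parts of $\tfrac{\pi}{12}$. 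Without this enumeration (or an equivalent finite check that each such strip develops the target vertex into a $\tfrac{\pi}{12}$-direction), your proposal asserts the conclusion rather than proving it, so as written there is a genuine gap.
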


\begin{figure}[htbp]
\begin{center}
 \includegraphics*[height=5cm]{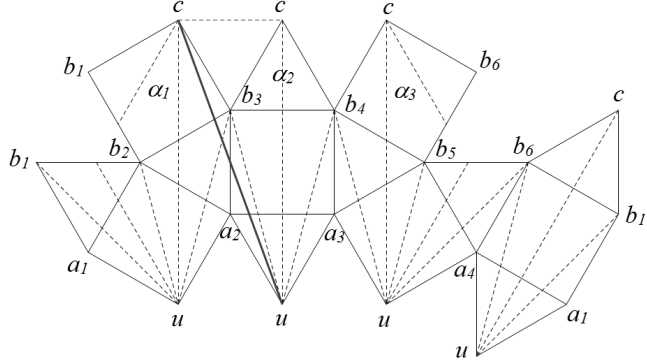}
 \end{center}
 \caption{ Geodesics starting from a vertex $u$. }
\label{fig-3}
\end{figure}

\begin{proof}
For any vertex $u$ of $\mathcal{C}$, consider all the segments from
$u$ to any other vertex $v$ (see Fig.  \ref{fig-3}). It is easy to
see that $d_{\mathscr{C}}(u,v)\leq 3$. If $d_{\mathscr{C}}(u,v)=1$,
then $v\in \{a_1, a_2, a_3, a_4\}$. Clearly $g(u,v)$ is an edge of
$\mathcal{C}$. If $d_{\mathscr{C}}(u,v)=2$, then $v\in \{b_1, b_2,
b_3, b_4, b_5, b_6\}$. Further, if $v=b_2$ or $v=b_5$, then $g(u,v)$
is a diagonal of a square face on the surface; if $v\in \{b_1, b_3,
b_4, b_6\}$, then there are two geodesics between $u$ and $v$. If
$d_{\mathscr{C}}(u,v)=3$, then $v=c$ and there are six geodesics
between $u$ and $v$. Please note that the solid line between $u$ and
$v$ in Fig. \ref{fig-3} is not a geodesic. Thus there are 20
geodesics starting from $u$ to any other vertex $v$ on
$\mathcal{C}$, and all of them divide the total angle around $u$
into 20 equal parts. Trivially, each part has angle
$\frac{\pi}{12}$. Thus the angle formed by $g(u, w_1)$ and $g(u,
w_2)$ on $\mathcal{C}$ is equal to $\frac{\pi}{12}i$, where $i\in
\mathbb{Z}$ and $1\leq i\leq20$.
  \end{proof}
\begin{lemma}\label{lem:cub-no 8}
There is no acute triangulation of $\mathcal{C}$ with $8$ triangles.
\end{lemma}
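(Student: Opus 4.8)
The plan is to argue by contradiction, assuming $\mathscr{T}$ is an acute triangulation of $\mathcal{C}$ with $|\mathscr{T}|=8$, and to produce a clash between Lemma~\ref{lem1} and the Gauss--Bonnet theorem on the cone surface $\mathcal{C}$. By the same edge count and Euler's formula as in the proof of Theorem~\ref{thm1} (namely $3|\mathscr{T}|=2E$ and $V-E+|\mathscr{T}|=2$), one gets $E=12$ and $V=6$, so the six vertices of $\mathscr{T}$ satisfy $\sum_v\deg(v)=2E=24$. Recall that $\mathcal{C}$ is flat except at its $12$ vertices, where the total angle is $\tfrac{5\pi}{3}$; in particular a shortest path on $\mathcal{C}$ cannot pass through a cuboctahedron vertex in its relative interior, since the cone angle there is $<2\pi$ and the path could be shortcut. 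Hence no cuboctahedron vertex lies in the relative interior of an edge of $\mathscr{T}$.

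Next I would show that all six vertices of $\mathscr{T}$ are cuboctahedron vertices. At a vertex $v$ of $\mathscr{T}$ the $\deg(v)$ triangle angles meeting at $v$ sum to the total angle of $\mathcal{C}$ at $v$, which is $\tfrac{5\pi}{3}$ if $v$ is a cuboctahedron vertex and $2\pi$ otherwise; since each of these angles is $<\tfrac{\pi}{2}$, this forces $\deg(v)\ge4$ in the first case and $\deg(v)\ge5$ in the second. If $k$ of the six vertices of $\mathscr{T}$ are cuboctahedron vertices, then $24=\sum_v\deg(v)\ge 4k+5(6-k)=30-k$, whence $k\ge6$, so $k=6$.

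Finally, since every vertex of $\mathscr{T}$ is a cuboctahedron vertex, every edge of $\mathscr{T}$ is a geodesic joining two vertices of $\mathcal{C}$, so Lemma~\ref{lem1} applies to every angle of every triangle of $\mathscr{T}$: each angle is a positive multiple of $\tfrac{\pi}{12}$, hence, being acute, at most $\tfrac{5\pi}{12}$; consequently every triangle of $\mathscr{T}$ has angle sum at most $\tfrac{5\pi}{4}$. But $\mathcal{C}$ has $12$ vertices while $\mathscr{T}$ has only $6$, so some vertex $p$ of $\mathcal{C}$ is not a vertex of $\mathscr{T}$; by the first paragraph $p$ does not lie on an edge of $\mathscr{T}$ either, hence $p$ lies in the interior of some triangle $\tau\in\mathscr{T}$. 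By Gauss--Bonnet for geodesic triangles on $\mathcal{C}$, the angle sum of $\tau$ is $\pi$ plus the angle defect it encloses, which is at least $\pi+\tfrac{\pi}{3}=\tfrac{4\pi}{3}>\tfrac{5\pi}{4}$ --- a contradiction. The crux of the argument is the middle step, forcing $k=6$ so that Lemma~\ref{lem1} governs \emph{all} angles; once that is secured, the $\tfrac{\pi}{12}$-quantization cap $\tfrac{5\pi}{4}$ and the Gauss--Bonnet bound $\tfrac{4\pi}{3}$ for any triangle swallowing a leftover cone point collide immediately. The only other points that need care are the standard facts that shortest paths avoid cone points of angle $<2\pi$ and that the triangle angles around each vertex of $\mathscr{T}$ exhaust the total angle of $\mathcal{C}$ at that point.
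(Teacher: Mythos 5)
Your proof is correct, and the endgame is genuinely different from the paper's. Both arguments share the same opening: Euler's formula gives $V=6$, $E=12$, and the combination of acuteness with the cone angles ($\frac{5\pi}{3}$ at a vertex of $\mathcal{C}$, $2\pi$ elsewhere) forces every vertex of $\mathscr{T}$ to be a vertex of $\mathcal{C}$ of degree exactly $4$ --- a point the paper asserts with only a brief indication, whereas your degree-sum inequality $24\ge 30-k$ makes it explicit. Both then invoke Lemma \ref{lem1} to quantize the angles in multiples of $\frac{\pi}{12}$. From there the paper pushes the quantization further (four angles summing to $\frac{5\pi}{3}$ must each equal $\frac{5\pi}{12}$) and rules out a triangle with three angles $\frac{5\pi}{12}$ by an explicit three-case unfolding analysis according to whether the $\mathscr{C}$-distance of two adjacent vertices is $1$, $2$ or $3$. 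You instead observe that the angle sum of each triangle is capped at $\frac{5\pi}{4}$, that six of the twelve cone points of $\mathcal{C}$ are unaccounted for and (since shortest paths avoid cone points of angle less than $2\pi$) must lie in the open interior of some triangle, and that Gauss--Bonnet then forces that triangle's angle sum to be at least $\pi+\frac{\pi}{3}=\frac{4\pi}{3}>\frac{5\pi}{4}$. Your route buys a shorter, figure-free and more conceptual contradiction, at the cost of importing Gauss--Bonnet for polyhedral cone surfaces and the standard fact about geodesics avoiding positive-curvature cone points; the paper's route stays entirely within elementary unfolding geometry, consistent with the toolkit it uses elsewhere. Both are sound.
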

\begin{proof}
Suppose there exists an acute triangulation $\mathscr{T}$ of
$\mathcal{C}$ containing 8 triangles. By a method similar to that
used in the proof of Theorem \ref{thm1}, we know that $\mathscr{T}$
is isomorphic to the 1-skeleton of the regular octahedron, where all
the vertices have degree 4. Clearly each vertex of $\mathscr{T}$ is
a vertex of $\mathcal{C}$. By Lemma \ref{lem1}, it is easily seen
that any acute angle in $\mathscr{T}$ is $\frac{\pi}{12}i$,  where
$i=1, 2, 3, 4, 5$. Recall that the total angle at any vertex of
$\mathcal{C}$ is $\frac{5\pi}{3}$. Therefore the four angles around
each vertex of $\mathscr{T}$ are all isogonal and equal to
$\frac{5\pi}{12}$.

Now let $v_{1}, v_{2}$ be two adjacent vertices in $\mathscr{T}$.
Then in both of the triangles having side $g(v_{1},v_{2})$, all the
three angles are equal to $\frac{5\pi}{12}$. In the following we
show that in one of them, where the third vertex is denoted by
$v_3$, there is always a contradiction.

\begin{figure}[htbp]
\begin{center}
 \includegraphics*[height=3cm]{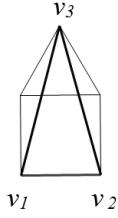}
 \end{center}
 \caption{ $d_{\mathscr{C}}(v_{1}, v_{2})=1$. }
\label{fig-4-1}
\end{figure}

\begin{figure}[htbp]
\begin{center}
 \includegraphics*[height=3cm]{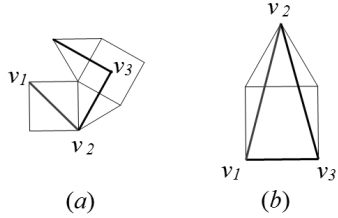}
 \end{center}
 \caption{ $d_{\mathscr{C}}(v_{1}, v_{2})=2$. }
\label{fig-4-2}
\end{figure}

There are three cases to consider.

$Case\ 1$.  $d_{\mathscr{C}}(v_{1}, v_{2})=1$. \ \

If $\angle v_{3}v_{1}v_{2}=\angle v_{3}v_{2}v_{1}=\frac{5\pi}{12}$,
then clearly we have $\angle v_{1}v_{3}v_{2}=\frac{\pi}{6}\neq
\frac{5\pi}{12}$, a contradiction, as shown in Fig.  \ref{fig-4-1}.

$Case\ 2$.  $d_{\mathscr{C}}(v_{1}, v_{2})=2$.

If $g(v_1,v_2)$ is a diagonal of a square face of $\mathcal {C}$,
then $\angle v_{3}v_{1}v_{2}=\angle v_{3}v_{2}v_{1}=\frac{5\pi}{12}$
forces $v_3$ not to be a vertex of $\mathcal{C}$ (see Fig.
\ref{fig-4-2}$(a)$), a contradiction. Otherwise, by the proof of
Lemma \ref{lem1} we may assume that $v_1$ is a corner of a square
face and $g(v_1,v_2)$ intersects the interior of the square face, as
show in Fig.  \ref{fig-4-2}$(b)$. Let $v_3$ be another corner of the
square face such that $\angle v_{3}v_{1}v_{2}=\frac{5\pi}{12}$. By
the proof of $Case1$, we know that $\angle
v_{1}v_{2}v_{3}=\frac{\pi}{6}$, a contradiction again.

\begin{figure}[htbp]
\begin{center}
 \includegraphics*[height=3cm]{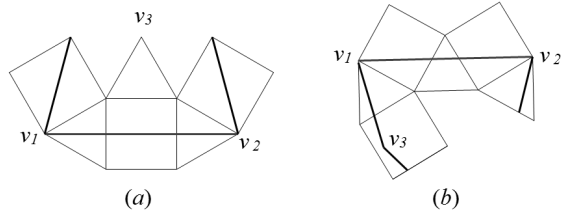}
 \end{center}
 \caption{ $d_{\mathscr{C}}(v_{1}, v_{2})=3$. }
\label{fig-4-3}
\end{figure}

$Case\ 3$. $d_{\mathscr{C}}(v_{1}, v_{2})=3$.

If $g(v_1,v_2)$ passes through two triangular faces and one square
face, then we consider the triangle $v_1v_2v_3$ lying above
$g(v_1,v_2)$, as shown in Fig.  \ref{fig-4-3}$(a)$. Clearly, $\angle
v_1v_2v_3=\angle v_2v_1v_3=\frac{5\pi}{12}$, but $\angle
v_1v_3v_2=\frac{5\pi}{6}$, a contradiction. If $g(v_1,v_2)$ passes
through one triangular face and two square faces, then we consider
the triangle $v_1v_2v_3$ lying below $g(v_1,v_2)$, as shown in Fig.
\ref{fig-4-3}$(b)$. It is easy to see that $\angle v_1v_2v_3=\angle
v_2v_1v_3=\frac{5\pi}{12}$ makes $v_3$ not be a vertex of
$\mathcal{C}$, which contradicts to the fact that each vertex of
$\mathscr{T}$ must be a vertex of $\mathcal{C}$.

The proof is complete.
\end{proof}
\begin{lemma}\label{lem:cub-no 10}
There is no acute triangulation of $\mathcal{C}$ with $10$
triangles.
\end{lemma}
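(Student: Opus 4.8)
The plan is to pin down the combinatorics of $\mathscr{T}$ to a very short list by Euler's formula together with the local angle restrictions, and then to kill the survivors using the metric of $\mathcal{C}$; essentially all of the work will be concentrated in a single case.

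\textbf{Combinatorial reduction.} A triangulation of the sphere with $10$ triangles has $15$ edges and, by Euler's formula, $7$ vertices. As in the proof of Lemma~\ref{lem:cub-no 8}, every vertex of $\mathscr{T}$ has degree at least $4$: around a flat point of $\mathcal{C}$ (total angle $2\pi$) one needs more than four acute angles, hence degree $\ge 5$, while around a cone point (total angle $\frac{5\pi}{3}$) one needs degree $\ge 4$; in particular every degree-$4$ vertex of $\mathscr{T}$ is one of the $12$ cone points. Since $\sum_v\deg v=30=7\cdot 4+2$, the degree sequence must be $(6,4,4,4,4,4,4)$ or $(5,5,4,4,4,4,4)$.

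\textbf{Two tools, and the ``large flat vertex''.} I will use repeatedly: (a) by Gauss--Bonnet a geodesic triangle of $\mathscr{T}$ whose interior contains $N$ cone points has angle sum $\pi+\frac{N\pi}{3}$, so acuteness forces $N\le 1$ and the sum is $\pi$ or $\frac{4\pi}{3}$; moreover no segment of $\mathcal{C}$ passes through a cone point (the cone angle is $<2\pi$), so every cone point which is not a vertex of $\mathscr{T}$ lies in the interior of a triangle; and (b) by Lemma~\ref{lem1}, at a vertex of $\mathscr{T}$ which is a cone point, the angle between two edges both running to cone-point vertices is a multiple of $\frac{\pi}{12}$, hence -- being an acute triangle angle -- lies in $\{\frac{\pi}{12},\dots,\frac{5\pi}{12}\}$. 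From (a) and (b): if all $7$ vertices were cone points, every triangle angle would be $\le\frac{5\pi}{12}$, every triangle would then have angle sum $\le\frac{5\pi}{4}<\frac{4\pi}{3}$ and contain no interior cone point, contradicting that $12-7=5$ cone points are non-vertices. So some vertex of $\mathscr{T}$ is a flat point, hence of degree $5$ or $6$. This already rules out $(6,4^6)$: the degree-$6$ vertex $f$ must then be flat, so its six neighbours are exactly the six degree-$4$ cone-point vertices, they form the link hexagon of $f$, each of them has exactly one further incident edge (a chord of the hexagon), and the three chords must triangulate the hexagon while forming a perfect matching -- impossible, since an ear of such a triangulation uses the diagonal joining two already-matched vertices and the residual pentagon then needs two more diagonals avoiding two fixed vertices.

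\textbf{Reduction to the pentagonal bipyramid.} So the degree sequence is $(5,5,4^5)$, with degree-$5$ vertices $f_1,f_2$. If $f_1\not\sim f_2$, each $f_j$ is joined to all of $v_1,\dots,v_5$, these then form a $5$-cycle, and $\mathscr{T}$ is the $1$-skeleton of the double pyramid over a pentagon. If $f_1\sim f_2$, a short case check on the placement of the remaining edges always produces a subdivision of $K_{3,3}$, contradicting planarity. Hence $\mathscr{T}$ is the pentagonal bipyramid, and by the previous paragraph not both apices are cone points; say $f_1$ is a flat point while the equatorial vertices $v_1,\dots,v_5$ are cone points.

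\textbf{Eliminating the pentagonal bipyramid -- the hard part.} Let $\mathcal{C}_1\ni f_1$, $\mathcal{C}_2\ni f_2$ be the disks cut off by the geodesic pentagon $v_1\cdots v_5$. At each $v_i$ all four incident angles exceed $\frac{\pi}{6}$ (a degree-$4$ cone point has $\frac{5\pi}{3}>3\cdot\frac{\pi}{2}$), so the two pentagon angles at $v_i$ -- each a sum of two incident angles, a multiple of $\frac{\pi}{12}$ by (b), with sum $\frac{5\pi}{3}$ -- each equal $\frac{9\pi}{12}$, $\frac{10\pi}{12}$ or $\frac{11\pi}{12}$. Thus the pentagon turns by at most $\frac{\pi}{4}$ at every vertex, and Gauss--Bonnet for $\mathcal{C}_1$ and $\mathcal{C}_2$ forces one side to contain exactly $3$ and the other exactly $4$ of the remaining $7$ cone points. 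I would then classify the geodesic pentagons of $\mathcal{C}$ of this ``almost straight'' type: each edge is a segment between two cone points, hence of length $1$, $\sqrt2$, $\frac{1+\sqrt3}{\sqrt2}$ or $1+\sqrt3$ (the segments realizing $\mathscr{C}$-distance $1$, $2$ of either kind, and $3$, computed from the unfoldings behind Fig.~\ref{fig-3}), and the near-straightness together with the list of directions available around a cone point leaves only a short list of admissible patterns of consecutive $\mathscr{C}$-distances. Finally, for each surviving pentagon one checks that the apex $f_1$ cannot be placed so that all five triangles $f_1v_iv_{i+1}$ are acute: each has angle sum $\pi$ or $\frac{4\pi}{3}$, its side $v_iv_{i+1}$ has a known length, and the direction of the edge $f_1v_i$ at $v_i$ is constrained both by Lemma~\ref{lem1} on the side facing $f_2$ (which is itself a cone point in the $(5,5,4^5)$ subcase with six vertices at cone points) and by the bound $>\frac{\pi}{6}$ everywhere, leaving no way for $\angle v_{i-1}v_if_1$, $\angle f_1v_iv_{i+1}$ and $\angle v_if_1v_{i+1}$ all to stay below $\frac{\pi}{2}$ as one goes around the loop.

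\textbf{Expected obstacle.} Everything up to the reduction to the pentagonal bipyramid is routine. The real difficulty is this last step, because the pure angle bookkeeping for the bipyramid is internally consistent: one genuinely has to bring in the actual distances and the local combinatorial geometry of the cuboctahedral surface, and that classification-and-elimination is the part that will be long and delicate.
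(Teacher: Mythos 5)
Your combinatorial reduction is sound and in places more careful than the paper's (which simply asserts that a $10$-triangle acute triangulation must be the pentagonal bipyramid): the degree count, the exclusion of the $(6,4,4,4,4,4,4)$ sequence via the hexagon-link matching argument, the Gauss--Bonnet bound $N\le 1$ on interior cone points, and the observation that not all seven vertices can be cone points are all correct, as is the computation that each equatorial pentagon angle lies in $\{\tfrac{3\pi}{4},\tfrac{5\pi}{6},\tfrac{11\pi}{12}\}$ (slightly sharper than the paper's ``Fact''). The strategy you outline afterwards is also the same one the paper follows.

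However, there is a genuine gap: the decisive step is not carried out, and you say so yourself. ``Classify the geodesic pentagons of this almost-straight type'' and ``for each surviving pentagon check that the apex cannot be placed'' is a plan, not an argument; the entire content of the lemma lives there. The paper spends three propositions on exactly this classification -- showing that $V(C_5)$ contains at least two adjacent pairs of cuboctahedron vertices, that edges of $C_5$ cannot realize $\mathscr{C}$-distance $3$ nor be square-face diagonals (each exclusion requiring a chase through the possible turning angles $\tfrac{3\pi}{4},\tfrac{5\pi}{6},\tfrac{11\pi}{12}$ and self-intersection arguments for the resulting polygonal path), and that consequently $C_5$ has a unique configuration up to symmetry -- and then kills that single configuration by a concrete geometric obstruction: the apex in one region is forced into a triangular face inside the planar semicircle on diameter $v_1v_5$, whence $\angle v_1v_6v_5>\tfrac{\pi}{2}$. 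Your sketch names neither the list of admissible pentagons nor the mechanism that defeats the apex placement, and the assertion that ``there is no way for the angles to stay below $\tfrac{\pi}{2}$ as one goes around the loop'' is precisely the claim that needs proof. Until that case analysis is actually executed, the lemma is not established. (A smaller point: the claimed $K_{3,3}$-subdivision in the $f_1\sim f_2$ subcase is also left unverified, though that one is routine.)
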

\begin{proof}
Suppose that  there exists an acute triangulation $\mathscr{T}$ of
$\mathcal{C}$ containing 10 triangles. Then  $\mathscr{T}$ is
isomorphic to the 1-skeleton of the double pyramid over the
pentagon. So $\mathscr{T}$ contains a 5-cycle $C_{5}$ and all its
vertices have degree $4$. Clearly, the vertices of $C_{5}$ must be
the vertices of $\mathcal{C}$. For the sake of convenience, let
$V(C_5)$ denote the set of all vertices of $C_5$ and $E(C_5)$ denote
the set of all edges of $C_5$. Furthermore, we have the following
fact.

{\bf Fact.} The angles formed by any two adjacent edges of $C_{5}$
are between $\frac{2\pi}{3}$ and $\pi$.

If $u$, $v$ are two adjacent vertices of $\mathcal{C}$, we call $u$,
$v$ an {\it adjacent pair} of $\mathcal{C}$. In order to prove Lemma
\ref{lem:cub-no 10}, we prove the following properties about the
cycle $C_5$ mentioned above at first.
\begin{proposition}\label{prop:cub-no 10-1}
 $V(C_{5})$ contains at least two adjacent pairs of
$\mathcal{C}$.
\end{proposition}
\begin{proof} We first show that $V(C_{5})$ contains at least one adjacent pair of
$\mathcal{C}$. Suppose that $u\in V(C_5)$, as shown in Fig.
\ref{fig-3}. If $\{a_1, a_2, a_3, a_4\} \cap V(C_5) \neq \emptyset$,
then clearly $V(C_{5})$ contains an adjacent pair. If not, then the
other four vertices of $C_5$ come from $b_1, b_2, b_3, b_4, b_5,
b_6$ and $c$. It is easy to see that among those four vertices there
must be at least one adjacent pair of $\mathcal{C}$.

Now let $v_{1}, v_{2}\in V(C_{5})$ be an adjacent pair of
$\mathcal{C}$, as shown in  Fig.  \ref{fig-5}.   If $\{u_1, u_2,
u_3, u_4, u_5\} \cap V(C_5) \neq \emptyset$, then clearly $V(C_{5})$
contains another adjacent pair of $\mathcal{C}$ and the proposition
is proved. Otherwise,  the other three vertices of $C_5$ come from
the remained five vertices $\overline{u}_1, \overline{u}_2,
\overline{u}_3, \overline{u}_4$ and $\overline{u}_5$ of
$\mathcal{C}$. It's not hard to see that among any three vertices
from $\overline{u}_1, \overline{u}_2, \overline{u}_3,
\overline{u}_4$ and $\overline{u}_5$ there must be one adjacent pair
of $\mathcal{C}$.
\end{proof}
\begin{proposition}\label{prop:cub-no 10-2}
Let $v_{i},v_{j}\in V(C_{5})$.

$(a)$ If $d_{\mathscr{C}}(v_{i},v_{j})=1$, then $g(v_{i},v_{j})\in
E(C_5)$;

$(b)$ If $d_{\mathscr{C}}(v_{i},v_{j})=3$, then
$g(v_{i},v_{j})\notin E(C_5)$;

$(c)$ If $d_{\mathscr{C}}(v_{i},v_{j})=2$ and $g(v_{i},v_{j})$ is a
diagonal of a square face of $\mathcal{C}$, then
$g(v_{i},v_{j})\notin E(C_5)$.
\end{proposition}
\begin{proof}
$(a)$ We fix two vertices $v_{1}, \ v_{2}$  such that
$v_{1},v_{2}\in V(C_{5})$ and $d_{\mathscr{C}}(v_{1},v_{2})=1$.
Suppose the contrary that $g(v_{1},v_{2})=v_1v_2 \notin E(C_5)$.
Then there is a vertex of $\mathcal{C}$, say $u$, such that it is
adjacent to both $v_{1}$ and $ v_{2}$ in $C_{5}$. Let the five
neighbors of $v_{1}$, $v_{2}$ in $\mathcal{C}$  be $u_{i}$, $i=1, 2,
3, 4, 5$, as shown in Fig.  \ref{fig-5}. By the Fact it is easy to
see that  $u\neq u_{i} \ (i=1, 2, 3, 4, 5)$. Now denote the five
remained vertices of $\mathcal {C}$ by $\overline{u}_i$ ($i=1, 2, 3,
4, 5$). For the sake of convenience, let $\eta_i$ denote the value
of the smaller angle formed by  $v_{1}$, $\overline{u}_i$ and
$v_{2}$ on $\mathcal{C}$. Since $\eta_1 \leq
\frac{\pi}{4}+\frac{\pi}{12}=\frac{\pi}{3}$, $\eta_3
\leq\frac{\pi}{12}+\frac{\pi}{3}+\frac{\pi}{12}=\frac{\pi}{2}$,
$\eta_5 \leq \frac{\pi}{4}+\frac{\pi}{12}=\frac{\pi}{3}$, by the
Fact we have $u \notin \{\overline{u}_1, \overline{u}_3,
\overline{u}_5\}$. Noticing that $\eta_2 \leq
\frac{\pi}{3}+\frac{\pi}{12}+\frac{\pi}{3}$ =
$\frac{\pi}{6}+\frac{\pi}{2}+\frac{\pi}{12}=\frac{3\pi}{4}$, we may
assume that $u=\overline{u}_2$. Then by the Fact we have
$\frac{2\pi}{3} < \eta_2 < \pi$, and therefore
$\eta_2=\frac{3\pi}{4}$ (by Lemma \ref{lem1}).  Now let $v_1'$ be
the other adjacent vertex of $v_1$ in $C_5$. In order to ensure
$\frac{2\pi}{3}<\angle \overline{u}_2v_1v_1'<\pi$, that is, $ \angle
\overline{u}_2v_1v_1' \in \{\frac{3\pi}{4}, \frac{5\pi}{6},
\frac{11\pi}{12}\}$, it is easy to check that $g(v_1,v_1')$  always
intersects $g(v_2,\overline{u}_2)$,  a contradiction. Similarly, if
$u=\overline{u}_4$ we also obtain a contradiction.

\begin{figure}[htbp]
\begin{center}
 \includegraphics*[height=5cm]{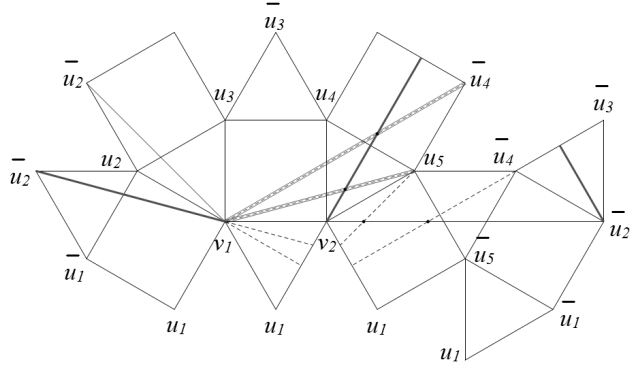}
 \end{center}
 \caption{ An adjacent pair of $\mathcal{C}$. }
\label{fig-5}
\end{figure}

$(b)$  Without loss of generality, we assume that $v_i=u$, $v_j=c$,
as shown in Fig.  \ref{fig-3}. Now suppose the contrary that
$g(v_i,v_j)=g(u,c)\in E(C_5)$. For the sake of convenience, denote
by $u'$ the other adjacent vertex of $u$ in $C_5$. Since
$\frac{2\pi}{3}<\angle u'uc<\pi$, by Lemma \ref{lem1}, we have
$\angle u'uc\in \{\frac{3\pi}{4}, \frac{5\pi}{6},
\frac{11\pi}{12}\}$. There are two cases to consider.

$Case\ 1$. $g(u,c)$ passes through two triangular faces and one
square face.

We consider the rightmost geodesic $g(u,c)$ in Fig.  \ref{fig-3}. If
$\angle u'uc=\frac{3\pi}{4}$ or $\angle u'uc=\frac{11\pi}{12}$, then
$u'= b_3$ or $u'= b_4$. Clearly,  $d_{\mathscr{C}}(b_3,c) =
d_{\mathscr{C}}(b_4,c)=1$. By $(a)$, we  have $ g(b_3,c)\in E(C_5)$
or $ g(b_3,c)\in E(C_5)$. Thus we obtain a 3-cycle $ucb_3u$ or
$ucb_4u$, a contradiction. If $\angle u'uc=\frac{5\pi}{6}$, then
$u'=c$. We obtain a 2-cycle $ucu$, a contradiction again.

$Case\ 2$. $g(u,c)$ passes through one triangular face and two
square faces.

We consider the second leftmost geodesic $g(u,c)$ (the vertical one)
in Fig.  \ref{fig-3}. If $\angle u'uc=\frac{5\pi}{6}$, then $u'=c$
and we obtain a 2-cycle $ucu$, a contradiction. If $\angle u'uc \in
\{\frac{3\pi}{4}, \frac{11\pi}{12}\}$, then $u'=b_5$ or $u'= b_6$.
If $u'= b_6$, by  $(a)$, we obtain a contradiction; If $u'=b_5$, let
$u''$ be the other adjacent vertex of $u'$ in $C_5$. Thus $\angle
uu'u''\in \{\frac{3\pi}{4}, \frac{5\pi}{6}, \frac{11\pi}{12}\}$. If
$\angle uu'u''=\frac{3\pi}{4}$,  then $u''=b_2$ and the geodesic
$g(b_5,b_2)$ in $E(C_5)$ must pass through the faces $\alpha_1,
\alpha_2, \alpha_3$, which intersects $g(u,c)$ in its interior. This
is impossible in $C_5$. If $\angle uu'u''=\frac{5\pi}{6}$, then
$u''=c$ and we obtain a 3-cycle $ub_5cu$, a contradiction again. If
$\angle uu'u''=\frac{11\pi}{12}$,  then $u''=b_2$. Let $u'''$ be the
other adjacent vertex of $u''$ in $C_5$. We know that $\angle
u'u''u''' \in \{\frac{3\pi}{4}, \frac{5\pi}{6}, \frac{11\pi}{12}\}$
which implies that $u''' \in \{a_3, b_5, u\}$. Clearly, $u'''\neq
b_5$ and $u'''\neq u$. If $u'''=a_3$, then $g(b_2, a_3)$ in $E(C_5)$
intersects $g(u, c)$ in its interior, which is a contradiction.

$(c)$ Without loss of generality, let  $v_i=u_1$, $v_j=u_2$ and
$g(u_1,u_2)$ is a diagonal  of a square face of $\mathcal{C}$, as
shown in Fig.  \ref{fig-5}. Now suppose the contrary that
$g(u_1,u_2)\in E(C_5)$. Let $u_1'$ be the other adjacent vertex of
$u_1$ in $C_5$. By the Fact and Lemma \ref{lem1}, we have $\angle
u_1'u_1u_2\in \{\frac{3\pi}{4}, \frac{5\pi}{6}, \frac{11\pi}{12}\}$.
If $\angle u_1'u_1u_2 \in \{\frac{3\pi}{4},
 \frac{11\pi}{12}\}$, then
$u_1'=\overline{u}_3$. Clearly,
$d_{\mathscr{C}}(u_1,\overline{u}_3)=3$, which contradicts to $(b)$.
If $\angle u_1'u_1u_2=\frac{5\pi}{6}$, then $u_1'=u_5$. Let $u''$ be
the other adjacent vertex of $u_1'$ in $C_5$. Noticing that
$d_{\mathscr{C}}(u_1,u_5)=2$ and $g(u_1,u_5)$ is a diagonal of a
square face of $\mathcal{C}$, by the above discussion we know that
if $\angle u_1u_5u_1''\in \{\frac{3\pi}{4}$, $\frac{11\pi}{12}\}$,
then there is a contradiction;  if $\angle
u_1u_5u_1''=\frac{5\pi}{6}$, then $u_1''=\overline{u}_3$. Repeating
the above process again and we obtain a 4-cycle
$u_1u_5\overline{u}_3u_2u_1$, which is a contradiction.
\end{proof}
\begin{proposition}\label{prop:cub-no 10-3}
$C_5$ has only one possible configuration as shown in Fig.
\ref{fig-6}.
\end{proposition}

\begin{figure}[htbp]
\begin{center}
 \includegraphics*[height=5cm]{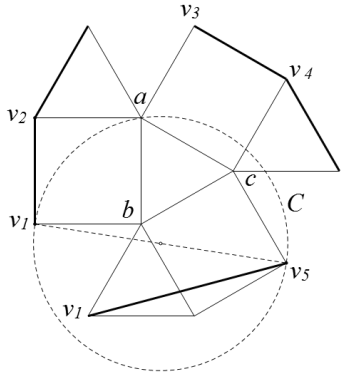}
 \end{center}
 \caption{ The 5-cycle $C_5$. }
\label{fig-6}
\end{figure}

\begin{proof}
Denote the vertices of $ C_5$ by $v_i$  ($i=1, 2, 3, 4, 5$), and
$v_i$, $v_{i+1}$ are adjacent in $C_5$ ($i+1$ takes modulo 5). By
Proposition  \ref{prop:cub-no 10-1} and \ref{prop:cub-no 10-2},
there are two edges of $C_5$, say $e_1$ and $e_2$, which are edges
of $\mathcal{C}$. There are two cases to consider.

$Case\ 1$. $e_1$ and $e_2$ are adjacent in $\mathcal{C}$.

Suppose that $e_1=v_1v_2$ and $e_2=v_2v_3$. Without loss of
generality, we may assume that $v_1=a_1$, $v_2=u$, as shown in Fig.
\ref{fig-3}. Please keep in mind that any angle formed by two
adjacent edges of $C_5$ is between $\frac{2\pi}{3}$ and $\pi$. Thus
we have  $v_{3}=a_3$. By Proposition \ref{prop:cub-no 10-2}, we know
that $d_{\mathscr{C}}(v_{3}, v_{4})=1$, or $d_{\mathscr{C}}(v_{3},
v_{4})=2$ and $g(v_{3}, v_{4})$ passes through one triangular face
and one square face, which implies that $v_4=b_4$ or $c$.

If $v_4=b_4$ and $d_{\mathscr{C}}(v_{4}, v_{5})=1$, then  $v_5=c$
and we obtain a 5-cycle $a_1ua_3b_4ca_1$, as the configuration
described in Fig.  \ref{fig-6}. If $v_4=b_4$ and
$d_{\mathscr{C}}(v_{4}, v_{5})=2$, then $v_5=b_1$. Since
$d_{\mathscr{C}}(v_{5}, v_{1})=1$, by Proposition \ref{prop:cub-no
10-2}$(a)$, $g(v_5, v_1)=v_5v_1\in E(C_5)$. Thus we obtain a 5-cycle
$a_1ua_3b_4b_1a_1$. If $v_4=c$ and $d_{\mathscr{C}}(v_{4},
v_{5})=1$, then $v_5=b_1$, we obtain a 5-cycle $a_1ua_3cb_1a_1$. If
$v_4=c$ and $d_{\mathscr{C}}(v_{4}, v_{5})=2$, then $v_5=a_1$, which
is a contradiction.

 $Case\ 2$. $e_1$ and $e_2$ are not adjacent in $\mathcal{C}$.

Suppose that $e_1=v_1v_2$ and $e_2=v_3v_4$. Without loss of
generality, we may assume that $v_1=a_1$, $v_2=u$, as shown in Fig.
\ref{fig-3}. By Proposition \ref{prop:cub-no 10-2}, we know that
$d_{\mathscr{C}}(v_{2}, v_{3})=1$, or $d_{\mathscr{C}}(v_{2},
v_{3})=2$ and $g(v_{2}, v_{3})$ passes through one triangular face
and one square face. That is, $v_3=a_3$ or $b_4$. If $v_3=a_3$, then
the discussion is same to that in $Case\ 1$. If $v_3=b_4$, then
clearly $v_4=c$. This situation has been discussed in Case 1.
\end{proof}

Now we are back to the proof of Lemma \ref{lem:cub-no 10}. Clearly,
a  5-cycle $C_{5}$ described above  decomposes $\mathcal{C}$ into
two regions, and one of them is shown in Fig.  \ref{fig-6}. Without
loss of generality,
 let $v_6$ be the vertex of the acute triangulation $\mathscr{T}$ lying in this region.
 Since  $\angle v_6v_2v_1<\frac{\pi}{2}$, $v_6$ can not lie in the triangular face $av_2v_3$ and the square face $av_3v_4c$
 except for the edge $ac$.
  Further, since $\angle v_6v_1v_2<\frac{\pi}{2}$, $\angle v_6v_3v_4<\frac{\pi}{2}$ and
  $\angle v_6v_4v_3<\frac{\pi}{2}$, $v_6$ must lie in the  triangular face $abc$. Clearly, $v_6 \notin \{a, b, c\}$ and the
  edge $g(v_1,v_6)$ of $\mathscr{T}$ must intersect the square face $v_1bav_2$.  In Fig. \ref{fig-6}, let
   $C$ be the planar circle with diameter $v_{1}v_{5}$ (here $v_{1}v_{5}$ is the dash segment
    instead of the geodesic). It is easy to see that $v_6$ lies in the interior of the
    upper semi-disc bounded by $C$ and the dash segment $v_1v_5$. As a result, we have
     $\angle v_1v_6v_5>\frac{\pi}{2}$, which contradicts to the fact that $\mathscr{T}$ is an acute triangulation. Therefore,
there is no acute triangulation of $\mathcal{C}$ with ten triangles.
\end{proof}

 Combining Theorem \ref{thm1}, \ref{thm2}, Lemma \ref{lem:cub-no
8}, \ref{lem:cub-no 10}, we obtain the following main theorem
immediately.
\begin{theorem}\label{thm-cub}
The surface of the cuboctahedron  admits an acute triangulation with
$12$ triangles, and there is no acute triangulation with fewer
triangles.
\end{theorem}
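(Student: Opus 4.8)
The plan is to deduce the theorem by combining the four results already established, so the only genuine work is to exclude acute triangulations of size $4$ or $6$. I would begin with the parity remark: $\mathcal{C}$ is homeomorphic to $S^2$, so any triangulation with $F$ triangles, $E$ edges and $V$ vertices satisfies $3F=2E$ and $V-E+F=2$; in particular $F$ is even, so it suffices to rule out $|\mathscr{T}|\in\{4,6,8,10\}$ for an acute triangulation $\mathscr{T}$ of $\mathcal{C}$.

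For $|\mathscr{T}|=4$ and $|\mathscr{T}|=6$ I would argue as in the proof of Theorem \ref{thm1}. From $3F=2E$ and Euler's formula one gets $(V,E,F)=(4,6,4)$, respectively $(5,9,6)$, and since the vertex degrees of a spherical triangulation sum to $2E$ while each is at least $3$, a size-$4$ triangulation has all vertices of degree $3$ and a size-$6$ triangulation has at least one vertex of degree $3$ (otherwise the degree sum would be at least $20>18$). On the other hand, the intrinsic total angle of $\mathcal{C}$ at every point is at least $\frac{5\pi}{3}$ — it equals $\frac{5\pi}{3}$ at the twelve conical vertices and $2\pi$ at every other point — and $\frac{5\pi}{3}>3\cdot\frac{\pi}{2}$, so in an acute triangulation at least four triangles meet at each vertex. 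This excludes $|\mathscr{T}|\in\{4,6\}$. Lemma \ref{lem:cub-no 8} and Lemma \ref{lem:cub-no 10} exclude $|\mathscr{T}|=8$ and $|\mathscr{T}|=10$. Hence every acute triangulation of $\mathcal{C}$ has at least $12$ triangles, while Theorem \ref{thm2} produces one with exactly $12$; this proves the theorem.

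There is no real obstacle at this stage, since the substantial arguments — the explicit $12$-triangle construction in Theorem \ref{thm2} and the case analyses in Lemmas \ref{lem:cub-no 8} and \ref{lem:cub-no 10} — may be assumed. The only points deserving a line of care are the reduction ``size $<12$ implies size $\in\{4,6,8,10\}$'', which is just the parity of $F$, and the fact that the degree bound applies at \emph{every} vertex of $\mathscr{T}$ and not merely at those coinciding with vertices of $\mathcal{C}$, which is why one needs the total angle to be $2\pi$ (hence $>3\pi/2$) at the non-conical points as well.
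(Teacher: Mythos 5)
Your proposal is correct and matches the paper's own (one-line) proof, which likewise just combines Theorem \ref{thm2} with the exclusions of sizes $4$, $6$ (from the degree argument in Theorem \ref{thm1}, since acute implies non-obtuse), $8$ (Lemma \ref{lem:cub-no 8}) and $10$ (Lemma \ref{lem:cub-no 10}), together with the parity of the number of triangles. Your added care about the total angle being $2\pi$ (hence degree $\geq 4$) at non-conical vertices is the same point the paper makes with ``each other vertex of $\mathscr{T}_0$ also has degree at least 4.''
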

\paragraph{Acknowledgements.}
 The second  author gratefully
acknowledges financial supports by NSF of China (10701033,
10426013);  program for New Century Excellent Talents in University,
Ministry of Education of China; the Plan of Prominent Personnel
Selection and Training for the Higher Education Disciplines in Hebei
Province; and WUS Germany (Nr. 2161). She is also indebt to Beijing
University for the financial support during her academic visit
there.

%

\end{document}